\newenvironment{proofof}[1]{\noindent {\em Proof of #1.}}{ \hfill\qed\\}                                                     
\renewcommand{\footnote}{\endnote}
\newtheorem{theorem}{Theorem}
\newtheorem{lemma}[theorem]{Lemma}
\newtheorem{proposition}[theorem]{Proposition}
\newtheorem{definition}[theorem]{Definition}
\newtheorem{remark}[theorem]{Remark}
\newtheorem{corollary}[theorem]{Corollary}           
\author{Nicolas Bedaride}
\address{LATP  UMR 7353, Universit\'e Paul C\'ezanne, 
avenue escadrille Normandie Niemen, 
13397 Marseille cedex 20, France.
{\it E-mail address:} {\tt nicolas.bedaride@univ-amu.fr}
}
\title{Billiard complexity in rational polyhedra}
\begin{document}
\begin{abstract}
We give a new proof for the directional billiard complexity in the cube, 
which was conjectured in \cite{Ra} and proven in \cite{Ar.Ma.Sh.Ta}. 
Our technique gives us a similar theorem for some rational polyhedra.
\end{abstract}

\maketitle 
\bibliographystyle{plain}
\section{Introduction}
A billiard ball, i.e.\ a point mass, moves inside a polyhedron $P$ with unit 
speed along a straight line until it reaches the boundary $\partial{P}$, then 
instantaneously changes direction according to the mirror law, and continues 
along the new line. 

Label the sides of $P$ by symbols from a finite alphabet $\Sigma$ whose 
cardinality equal the number of faces of $P$. Fix a direction $\omega$ and 
code the orbit of a point by the sequence of sides it hits. Consider the 
set $\mathcal{L}(n,\omega)$ of all words of length $n$  which arise via 
this coding, and let $p(n,\omega)=card(\mathcal{L}(n,\omega))$. This is the 
complexity function in the direction $\omega$, it does not depend on the 
initial point if the billiard map is minimal. We want to compute the 
complexity.

There are no result on the complexity for general polyhedron $P$, the only 
known case is the cube if we use a coding with three letters using the same 
letter for parallel faces. It was first found 
by Arnoux, Mauduit, Shiokawa, and Tamura \cite{Ar.Ma.Sh.Ta, Ar.Ma.Sh.Ta1}. 
They use the fact that the billiard map in the cube is a rotation on 
the torus $\mathbb{T}^{2}$ and compute the numbers of cells of the $n$ 
iterate of the rotation. Their result generalizes the computation of 
the directional complexity in the square where the obtained sequences 
are Sturmian. This result was generalized to a cube in 
$\mathbb{R}^{s}, 2\leq s$ 
by Baryshnikov \cite{Ba}. For the rational polygons 
Hubert has given an exact formula for the directional complexity, 
it is a linear polynomial in $n$ and it does not depend on the 
direction \cite{Hu}.

To give a new proof of the complexity in the cube we consider the notion of 
a generalized diagonal, that is an orbit segment in the fixed direction which 
starts and ends in an edge of the cube. The combinatorial length of a 
generalized diagonal is the number of links. We note $N(\omega,n)$ the 
cardinal of the set of generalized diagonals of lenght $n$. It was considered 
in \cite{Ca.Hu.Tr} for the global complexity of a polygonal billiard. 
The notion of generalized diagonals can be defined for any map with a 
partition, it is enough to exchange the words edge and discontinuity of 
the partition, in the definition.

In the case of the rational polygons the billiard map, in one direction, 
is an interval exchange map, for the polyhedra there is a similar result 
for which we need some definitions.
 We call a polyhedron rational if the group $G$ generated by the linear 
reflections on the faces is finite. Furthermore we call a map an affine 
polygon exchange if there is an finite partition of a polygon $X$  in 
polygons, a map of $X$ on itself which is defined on each polygon by an 
isometry and such that the image of the partition is a partition.

Let $P$ be a rational polyhedron and $G$ his group of reflections. The 
phase space of the biliard map in $P$ has a decomposition into subspaces 
$\partial{P}\times G\omega$. This subspace can be viewed as a polyhedron 
$Q$ where we identify the parallel copies of the same face. The billiard
map in $P$ becomes a geodesic flow in $Q$, and the geodesic flow in a 
given direction yields an affine polygonal 
exchange which each partition element is coded by a single letter.

In general each face is represented several 
times in the polygonal exchange, thus the complexity of the polygonal
exchange yields bounds of the complexity of the billiard.
 However in the case of the cube, when coding the parallel faces by 
the same letters, the symbolic sequence  of a given point produced by 
the billiard is the same as the one produced by the geodesic flow of the 
given point in the cube with parallel faces identified.
 Thus the complexity of the billiard in the cube and its associated 
polygonal exchange are the same. 
 

Our main result relates the complexity of an IDOC2 polygon exchange to the 
number of its generalized diagonals.

\begin{definition}
We consider a affine polygon exchange, the discontinuities are intervals, we 
consider two of them $a\neq  b$. If $T^{n}a\cap T^{m}b$ is a point or is empty and  if $T^{n}\partial{a}\cap T^{m}\partial{b}$ is empty for every $n,m\in 
\mathbb{Z}$ then the polygonal exchange is called IDOC2. 
\end{definition}
\begin{proposition}\label{comp1}
Consider an affine polygon exchange which is IDOC2 and minimal.
We call $p(n)$ the complexity of the polygon exchange, with the natural 
coding and $N(n)$ the number of generalized diagonals of combinatorial 
length $n$ for this map.
$$p(n)=(2-n)p(1)+(n-1)p(2)+\sum_{i=2}^{n-1}{\sum_{j=1}^{i-1}{N(j)}}\quad 
\forall n>2.$$
\end{proposition}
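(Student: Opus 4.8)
The plan is to show that the claimed closed form is equivalent to a single second-difference identity, and then to prove that identity geometrically. Writing $R(n)$ for the right-hand side, a direct computation gives $R(n+1)-2R(n)+R(n-1)=N(n-1)$, since the affine part $(2-n)p(1)+(n-1)p(2)$ is annihilated by the second difference and the double sum contributes exactly $N(n-1)$. Conversely, summing the identity $p(n+1)-2p(n)+p(n-1)=N(n-1)$ twice and matching the two free constants against $p(1)$ and $p(2)$ reproduces $R(n)$ for all $n>2$. Thus it suffices to prove
$$p(n+1)-2p(n)+p(n-1)=N(n-1),\qquad n\geq 2,$$
together with the two base values, and I will spend the rest of the argument on this.

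Next I would set up the geometric model. By definition $p(n)$ is the number of nonempty atoms of the refined partition $\bigvee_{i=0}^{n-1}T^{-i}\mathcal{P}$, i.e.\ the number of two-dimensional cells cut out inside $X$ by the arrangement of arcs $\bigcup_{i=0}^{n-1}T^{-i}\Delta$, where $\Delta$ is the union of the discontinuity intervals of $T$. Because $T$ is an invertible piecewise isometry, an interior crossing of the arc $T^{-i}\Delta$ with the arc $T^{-i'}\Delta$ (with $i<i'$) is carried by the local isometry $T^{i}$ to a point of $\Delta\cap T^{-(i'-i)}\Delta$; that is, crossings at relative time $i'-i$ are exactly the generalized diagonals of combinatorial length $i'-i$. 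The two IDOC2 hypotheses are precisely what makes this arrangement generic: the condition that $T^{n}a\cap T^{m}b$ is a single point or empty forces every crossing to be a transversal double point (no overlapping arcs, no tangencies), while the condition that $T^{n}\partial a\cap T^{m}\partial b$ is empty forbids any coincidence of endpoints, or of an endpoint with an arc.

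The heart of the proof is to evaluate the second difference cell by cell. I will use the standard identity expressing $p(n+1)-2p(n)+p(n-1)$ as the sum, over all words $w$ of length $n-1$, of the bilateral multiplicity $m(w)=m_{\mathrm{bi}}(w)-m_{\ell}(w)-m_{r}(w)+1$, where $m_{\ell},m_{r}$ count one-sided extensions and $m_{\mathrm{bi}}$ counts two-sided extensions of $w$ in the language. Under IDOC2 the only words with $m(w)\neq 0$ are those whose central cell sits at a transversal crossing of two cut-arcs: locally the two arcs divide a neighbourhood into four quadrants, so $w$ has exactly two left extensions, two right extensions, and---by minimality, which guarantees all four local itineraries are realised---four bilateral extensions, whence $m(w)=4-2-2+1=1$. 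Every other word is either non-special or, at a free endpoint of a cut-arc, contributes $0$. Matching a bispecial word of length $n-1$ with the crossing it encodes, hence with a generalized diagonal whose two discontinuities are separated by time $n-1$, yields $p(n+1)-2p(n)+p(n-1)=N(n-1)$.

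The main obstacle I anticipate is the local classification that turns the global second difference into a clean count of $+1$'s: I must verify that IDOC2 rules out every degenerate configuration (triple crossings, tangencies, endpoints lying on other arcs, coincident endpoint orbits) so that no spurious bilateral multiplicity appears, and that minimality guarantees each of the four quadrants around a crossing corresponds to an admissible word, so that each generalized diagonal contributes exactly $1$ and not less. A secondary but necessary point is to pin the index precisely---confirming that the crossing recorded by a length-$(n-1)$ bispecial factor is a generalized diagonal of combinatorial length $n-1$ and not $n$---and to check the two base cases $p(1)$ and $p(2)$ directly, so that the double summation is correctly anchored.
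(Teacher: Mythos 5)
Your overall route coincides with the paper's: reduce the closed form to the second difference, invoke Cassaigne's identity $s(n+1)-s(n)=\sum_{v\in\mathcal{BL}(n)}\bigl(m_{b}(v)-m_{r}(v)-m_{l}(v)+1\bigr)$, and identify the bispecial contributions with transversal crossings of cut-arcs, i.e.\ with generalized diagonals. But the central step has a genuine gap. You claim that under IDOC2 every bispecial word $w$ has exactly $m_{l}(w)=m_{r}(w)=2$ and $m_{b}(w)=4$, so that each contributes $m(w)=1$ and bispecial words biject with diagonals. This amounts to assuming that the cell of $w$ contains at most one crossing, and IDOC2 does not give that: it says two arcs meet in at most one point and that endpoints of arcs never coincide, but nothing prevents two distinct transversal crossings, say $Ta\cap T^{-n}b$ and $Ta\cap T^{-n}b'$ for different discontinuities $b\neq b'$, from lying inside the same cell. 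In that case $m_{l}(w)$ or $m_{r}(w)$ exceeds $2$, your bijection breaks, and the correct contribution is $m(w)=k(w)$, the number of crossings in the cell, which can be any positive integer. The paper closes exactly this hole with an Euler-characteristic computation: applying $F=E-V+1$ to the three partitions of the cell of $v$, the vertex count gives $V_{b}(v)-V_{r}(v)-V_{l}(v)=k(v)$ and the edge count gives $E_{b}(v)-E_{r}(v)-E_{l}(v)=2k(v)$ (each crossing splits two arcs), whence $m_{b}(v)-m_{r}(v)-m_{l}(v)+1=k(v)$ with $k(v)$ arbitrary, and then $\sum_{v\in\mathcal{BL}(n)}k(v)=N(n)$. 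You would need this (or an equivalent inclusion--exclusion over crossings within a cell) in place of your local four-quadrant classification.

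Two smaller points. First, you assert that IDOC2 forbids ``endpoints lying on other arcs,'' but its second clause only excludes endpoint--endpoint coincidences $T^{n}\partial a\cap T^{m}\partial b=\emptyset$; an endpoint of one arc may still land on the interior of another arc or on a vertex of a cell, and the paper disposes of this residual degeneracy by a small perturbation of an edge that leaves all polygon counts unchanged (its Figure 1), a case your genericity claim silently skips. Second, your use of minimality is misplaced: the four quadrants around a crossing are nonempty open sets, so the corresponding words are automatically admissible; in the paper minimality is used earlier, to ensure the complexity of the coding does not depend on the initial point, so that $p(n)$ may be computed from the partition cells at all.
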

In fact we can apply Proposition \ref{comp1} to some rational polyhedra. 
The simplest ones are the right prisms with polygonal basis. In this case if 
we are able to bound the number of generalized diagonals in one 
direction we obtain bounds for the complexity.
\begin{definition}
Let $P$ be a rational polyhedron and $\omega$ a direction. The 
direction $\omega$ is called $BP$ irrational if there is no diagonal 
in this direction which passes through three sorts of edges and
there is no diagonal which passes through two parallel edges. 
\end{definition}

\begin{corollary}\label{cor}
Let $P$ be a rational polyhedron, $\omega$ a minimal and $BP$ irrational 
direction, and $N(n,\omega)$ the number of generalized diagonals of 
length $n$ in the direction $\omega$. There exists $C\in \mathbb{N}$ 
and $b>0$  constants such that for all $n>2$.
$$p(n,\omega)\leq (2-n)p(1,\omega)+(n-1)p(2,\omega)+b\sum_{i=2}^{n-1}{\sum_{j=1}^{i-1}{N(j,\omega)}}\leq p(C+n,\omega).$$
\end{corollary}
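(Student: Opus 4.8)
The plan is to reduce Corollary \ref{cor} to Proposition \ref{comp1}.The plan is to deduce Corollary \ref{cor} from Proposition \ref{comp1} applied to the affine polygon exchange $T$ that the billiard in direction $\omega$ induces on the associated polyhedron $Q$ (the faces of $\partial P\times G\omega$ with parallel copies identified). First I would check the hypotheses of Proposition \ref{comp1} for $T$. Minimality of $T$ is inherited from minimality of the direction $\omega$. For the IDOC2 condition I would translate the two clauses of $BP$ irrationality directly: ``no diagonal through three sorts of edges'' forbids a triple coincidence and yields $T^{n}\partial a\cap T^{m}\partial b=\emptyset$, while ``no diagonal through two parallel edges'' forces $T^{n}a\cap T^{m}b$ to be a point or empty for distinct discontinuities $a\neq b$. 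With both clauses in hand, Proposition \ref{comp1} gives the exact identity $\tilde p(n)=(2-n)\tilde p(1)+(n-1)\tilde p(2)+\sum_{i=2}^{n-1}\sum_{j=1}^{i-1}\tilde N(j)$ for the complexity $\tilde p$ and the generalized-diagonal count $\tilde N$ of $T$.

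Next I would set up the comparison between the billiard coding (one letter per face) and the polygon-exchange coding (one letter per partition element). Because every partition element lies in a single face, there is a letter-to-letter factor map from the $T$-coding onto the billiard coding; since each face is represented at most $b$ times, this map is at most $b$-to-one and the covering $Q\to P$ makes each billiard $n$-cylinder the image of at most $b$ cylinders of $T$. This gives $p(n,\omega)\le\tilde p(n)$ together with the reverse bound, and the same lift/projection correspondence applied to edge-to-edge orbit segments yields $N(j,\omega)\le\tilde N(j)\le b\,N(j,\omega)$, so that $b\sum N(j,\omega)$ matches the variable part $\sum\tilde N(j)$ of the formula above to leading order.

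For the left inequality I would not pass through $\tilde p(2)$, whose positive coefficient spoils a naive comparison, but instead transport the second-difference identity underlying Proposition \ref{comp1} through the factor map. Coarsening the coding can only merge cylinders, so the billiard second difference is dominated by the polygon-exchange one, $p(n+1,\omega)-2p(n,\omega)+p(n-1,\omega)\le\tilde N(n-1)\le b\,N(n-1,\omega)$; summing this twice against the billiard's own initial data $p(1,\omega)$ and $p(2,\omega)$ reproduces exactly the middle expression of the corollary and gives $p(n,\omega)\le(2-n)p(1,\omega)+(n-1)p(2,\omega)+b\sum_{i=2}^{n-1}\sum_{j=1}^{i-1}N(j,\omega)$.

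The right inequality is where the constant $C$ and the main difficulty appear. Writing the middle expression as $\tilde p(n)$ plus a correction that is linear in $n$ (the discrepancy between the billiard and the $T$ values at lengths $1$ and $2$), I would bound $\tilde p(n)\le p(n+C_{0},\omega)$ by a recognizability argument: the sheet carrying a point of $Q$ is determined by a window of at most $C_{0}$ billiard symbols, so every $T$-word of length $n$ injects into a billiard word of length $n+C_{0}$. Enlarging the shift to some $C\ge C_{0}$ then absorbs the linear correction, since for these polyhedral billiards $p(\cdot,\omega)$ grows super-linearly and $p(n+C,\omega)-p(n+C_{0},\omega)$ is itself of linear order. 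The hard part is precisely this recognizability estimate together with the verification that a single pair $(b,C)$ makes both inequalities hold at once: one must control how the coarsening distorts both the complexity and the generalized-diagonal count by only a bounded factor and a bounded shift, and check that the super-linear growth of $p(\cdot,\omega)$ suffices to swallow the linear error terms.
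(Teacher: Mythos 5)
Your overall architecture is the same as the paper's: apply Proposition \ref{comp1} to the polygon exchange induced on the unfolded polyhedron $Q$, then transfer the resulting formula to the billiard coding at the cost of a bounded multiplicity $b$ and a bounded shift $C$. Your right-hand inequality, via a recognizability window (an exchange letter is determined by boundedly many face-letters, because each face of $Q$ is cut by only boundedly many preimages of singularities), is exactly the paper's step $p(M'',n)\leq p(M',C+n)$, where $M'$ is the face-of-$Q$ coding and $M''$ the natural exchange coding; and your relation between diagonal counts corresponds to the paper's identification $\tilde N(j)=b\,N(j,\omega)$ with $b$ the number of polygons of the exchange.

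The genuine gap is in your left inequality. You assert that ``coarsening the coding can only merge cylinders, so the billiard second difference is dominated by the polygon-exchange one,'' i.e.\ $p(n+1,\omega)-2p(n,\omega)+p(n-1,\omega)\leq \tilde N(n-1)$. Merging cylinders controls the complexity itself ($p(M,n)\leq p(M'',n)$), but \emph{not} its second difference: by Lemma \ref{jul} the second difference is $\sum_{v\in\mathcal{BL}(n)}\bigl(m_b(v)-m_r(v)-m_l(v)+1\bigr)$, and merging letters can create bispecial words in the factor whose lifts are not bispecial, so the factor's second difference can exceed the extension's at particular lengths. This is precisely the delicate point of this paper: the Remark after Theorem \ref{comp} exhibits totally irrational, non-$B$-irrational directions in the cube for which the coarse three-letter coding fails the expected count --- the projection genuinely distorts the bispecial bookkeeping, which is the error in \cite{Ar.Ma.Sh.Ta}. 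The paper avoids your step entirely by reading the middle expression of Corollary \ref{cor} as \emph{exactly} $p(M'',n)$ (the initial values $p(1,\omega),p(2,\omega)$ being those of the exchange coding, by Proposition \ref{comp1} together with $\tilde N=bN$), so the left inequality reduces to the trivial chain $p(M,n)\leq p(M',n)\leq p(M'',n)$ and no second differences are ever compared across codings; your closing machinery about super-linear growth absorbing linear error terms then also becomes unnecessary. To repair your write-up, drop the second-difference domination claim and instead identify the middle term with the exchange complexity as the paper does.
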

To apply Proposition \ref{comp1} to the cube we have to compute the number of 
generalized diagonals and to understand the IDOC2 condition in the case
of the cube.
\begin{definition}
The initial direction of the billiard map is a vector $w=(w_{1},w_{2},w_{3})
\in \mathbb{R}^{3}$. If the $\omega_{i}$ are rationally independant we say 
that the direction is totally irrational, and if we add the condition that the
 $(\omega_{i}^{-1})$ \text{are} $\mathbb{Q}$ independant we call $\omega$ $B$ irrational. 
\end{definition}
The condition of total irrationality is the translation to the cube of the $BP$ condition, and it implies that the polygon exchange is IDOC2.
\begin{theorem}\label{comp}
Let $\omega$ be a $B$ irrational direction, then the directional billiard 
complexity in the cube satisfies
$$p(n,\omega)=n^{2}+n+1 \quad \forall n>0.$$
\end{theorem}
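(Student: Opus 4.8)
The plan is to apply Proposition \ref{comp1} to the affine polygon exchange associated to the cube and then to determine the three ingredients it requires: $p(1)$, $p(2)$, and the diagonal counts $N(n)$. First I would record that for the cube coded by three letters the billiard complexity coincides with the complexity of its associated polygon exchange, so it suffices to work with the latter. Since $\omega$ is $B$ irrational it is in particular totally irrational, which as noted forces the polygon exchange to be IDOC2; together with the minimality of $\omega$ this puts us exactly in the hypotheses of Proposition \ref{comp1}. Thus the whole theorem reduces to computing $p(1)$, $p(2)$ and $N(n)$.

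Next I would compute the low-order values directly. Every coordinate $\omega_i$ is nonzero, so each of the three families of faces is crossed and $p(1)=3$. For $p(2)$ I would unfold to the straight-line flow on $\mathbb{T}^{3}=\mathbb{R}^{3}/\mathbb{Z}^{3}$, where the coding records which coordinate hyperplane is crossed: the crossing times of type $i$ form an arithmetic progression of gap $1/|\omega_i|$. All six two-letter words with distinct letters occur, while a repeated word $ii$ occurs precisely when the gap $1/|\omega_i|$ is short enough to fall between two consecutive crossings of each other type, that is exactly for the index realizing $\max_i|\omega_i|$. Total irrationality makes this maximiser unique, so exactly one repeated word occurs and $p(2)=6+1=7$.

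The heart of the argument, and the step I expect to be the main obstacle, is to show that $N(n)=2$ for every $n\ge 1$. In the $\mathbb{T}^{3}$ model the edges are three circles $E_A,E_B,E_C$, one for each pair of coordinate hyperplanes. A short computation shows that a flow segment in direction $\omega$ can join two edges only when they are of distinct types, and that for such a pair the segment is rigidly determined by the integer $k$ giving the displacement in the shared constrained coordinate; conversely each $k\ge 1$ produces exactly one diagonal in each of the two orientations. Its combinatorial length $n(k)$ is the total number of hyperplane crossings along the segment, an increasing function of $k$ that grows with slope $(\,|\omega_1|+|\omega_2|+|\omega_3|\,)/|\omega_i|$ for the family with shared coordinate $i$. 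Summing the three slopes shows that the three families jointly produce diagonals at cumulative rate $2$ per unit length, which is the content of $N(n)=2$ on average; the difficult point is to upgrade this to the \emph{exact} equality $N(n)=2$ for each individual $n$. This is where the full strength of $B$ irrationality enters: the rational independence of the $(\omega_i^{-1})$ prevents any two diagonals from distinct families from sharing a combinatorial length and forces the floor-function lengths of the three families to interleave so that each integer $n$ is attained exactly twice.

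Finally, substituting $p(1)=3$, $p(2)=7$ and $N(j)=2$ into Proposition \ref{comp1} gives, for all $n>2$,
$$p(n)=3(2-n)+7(n-1)+2\sum_{i=2}^{n-1}(i-1)=(4n-1)+(n-1)(n-2)=n^{2}+n+1,$$
and since the cases $n=1,2$ have already been checked this completes the proof.
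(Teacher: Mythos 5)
Your skeleton matches the paper's: both arguments reduce to the Cassaigne identity behind Proposition \ref{comp1}, check $p(1)=3$, $p(2)=7$, and rest everything on the exact count $N(n)=2$. The paper gets $N(n)=2$ geometrically: unfold from a vertex $O$, take the segment of direction $\omega$ through $n$ cubes ending at $M$, translate parallel to the two directions of the final face to produce exactly two diagonals of length $n$, with uniqueness from the cube's symmetries and the length checked by projecting to the square. Your proposal, by contrast, parametrizes the diagonals by the shared coordinate $i$ and an integer $k$ and then needs the floor-function lengths of the three families to interleave. But at exactly the step you yourself flag as ``the main obstacle'' you give no argument: the sentence that $B$ irrationality ``forces the floor-function lengths of the three families to interleave so that each integer $n$ is attained exactly twice'' is a restatement of $N(n)=2$, not a proof of it. An actual proof exists along your lines --- the length of the family-$i$, parameter-$k$ diagonal is $k+\lfloor k|\omega_j|/|\omega_i|\rfloor+\lfloor k|\omega_l|/|\omega_i|\rfloor$, i.e.\ the rank of $k/|\omega_i|$ in the merged union of the three progressions $\{m/|\omega_j|\}$, and rank is a bijection onto $\mathbb{N}$ --- but note that this argument uses only total irrationality, which exposes the deeper problem.

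Your placement of $B$ irrationality is wrong, and as written your proposal would establish $p(n,\omega)=n^2+n+1$ for \emph{every} totally irrational direction: you derive IDOC2, the values $p(1),p(2)$, and (implicitly, by the rank argument above) the interleaving all from total irrationality alone. That conclusion is false: the paper's Remark and the lemma following the proof exhibit totally irrational, non-$B$-irrational directions, e.g.\ $(1-\frac{\pi}{6},1,\frac{6-\pi}{12-\pi})$, with $p(n,\omega)<n^2+n+1$ for some $n$; for such $\omega$ a single segment meets three edges of three different types. What $B$ irrationality actually buys --- via the relation $\frac{a-c}{\omega_1}=\frac{b}{\omega_3}-\frac{d}{\omega_2}$ in the paper's lemma --- is that no diagonal passes through an edge in its interior, equivalently that no three singular lines of the polygon exchange are concurrent. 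Without this exclusion your length formula is wrong at the degenerate parameters (a trajectory through an edge passes into a diagonally adjacent cube, changing the link count), the rank bijection breaks, and, more fundamentally, the Euler count $m_{b}(v)-m_{r}(v)-m_{l}(v)+1=k(v)$ underlying Proposition \ref{comp1} fails at a triple point even though the literal pairwise IDOC2 conditions can still hold there. This is precisely the error in \cite{Ar.Ma.Sh.Ta,Ar.Ma.Sh.Ta1} that the hypothesis of the theorem was introduced to repair, and your proof reproduces it; to close the gap you must prove, not assert, that $B$ irrationality kills trajectories through three edge types, and then feed that into both the interleaving argument and the verification that Proposition \ref{comp1} applies.
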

\begin{remark}
In \cite{Ar.Ma.Sh.Ta,Ar.Ma.Sh.Ta1} the theorem was proven with 
the condition of total irrationality, but there exists some directions $\omega$ which are totally irrational, not $B$ irrational and such that there exists
 $n$ with $p(n,\omega)<n^{2}+n+1$. The mistake in their proof is minor. In 
\cite{Ba} an alternative proof was given with the condition of $B$ 
irrationality.

Proposition \ref{comp1} is true under a condition weaker than the affinity 
assumed. Without modification of its proof, the relation 
between the global complexity and the number of generalized 
diagonals of combinatoric lenght $n$ holds for polygonal billiards 
generalizing the result of \cite{Ca.Hu.Tr} to the non convex case.
\end{remark}  


We obtain a similar result for some right prisms.
\begin{theorem}\label{comp2}

Let a right prism with a tiling polygon as base. 
Consider a natural coding on this billiard table and 
let $\omega$ be a minimal and $BP$ irrational direction, then there
exists positives constants $A,B$ such that
$$B\leq \frac{p(n,\omega)}{n^{2}}\leq A \quad \forall n>0.$$

\end{theorem} 
\section{Polygonal exchanges}
\begin{proofof}{Proposition \ref{comp1}}
First of all we have to recall some results of words combinatorics \cite{Ca}.

For any $n\geq1$ let $s(n):=p(n+1)-p(n)$. For $v \in \mathcal{L}(n)$ let 

$$m_{l}(v)= card\{a\in \Sigma,va\in \mathcal{L}(n+1)\},$$
$$m_{r}(v)= card\{b\in \Sigma,bv\in \mathcal{L}(n+1)\},$$
$$m_{b}(v)= card\{a\in \Sigma, b\in \Sigma,bva\in \mathcal{L}(n+2)\}.$$

A word is call right special if $m_{r}(v)\geq 2$, left special if 
$m_{l}(v)\geq 2$ and bispecial if it is right and left special. 
Let $\mathcal{BL}(n)$ be the set of the bispecial words. Cassaigne \cite{Ca} has shown:
\begin{lemma}\label{jul}
$$s(n+1)-s(n)=\sum_{v\in \mathcal{BL}(n)}{m_{b}(v)-m_{r}(v)-m_{l}(v)+1}.$$
\end{lemma}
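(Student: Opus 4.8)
The plan is to deduce the identity from elementary counting on the factor language, the only input beyond pure bijections being that for a minimal exchange the language of the natural coding is factorial and two-sided extendable, i.e.\ every factor extends to a longer factor on either side. First I would record three bijective counting identities, each a sum over $v\in\mathcal{L}(n)$. Sending a word of length $n+1$ to the pair (prefix of length $n$, last letter) gives $p(n+1)=\sum_{v}m_l(v)$; sending it instead to (first letter, suffix of length $n$) gives $p(n+1)=\sum_{v}m_r(v)$; and sending a word of length $n+2$ to (first letter, central factor of length $n$, last letter) gives $p(n+2)=\sum_{v}m_b(v)$. Together with $p(n)=\sum_{v}1=\mathrm{card}\,\mathcal{L}(n)$, these express $p(n),p(n+1),p(n+2)$ as sums indexed by the single set $\mathcal{L}(n)$. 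None of these three identities needs extendability: each is a genuine bijection, a factor with no extension simply contributing $0$.

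Next I would write the second difference as $s(n+1)-s(n)=p(n+2)-2p(n+1)+p(n)$ and substitute the four identities, using the $m_l$ expression for one copy of $p(n+1)$ and the $m_r$ expression for the other. Collecting terms over the common index set yields
$$s(n+1)-s(n)=\sum_{v\in\mathcal{L}(n)}\bigl(m_b(v)-m_r(v)-m_l(v)+1\bigr).$$
It then remains to show that the summand, the bilateral multiplicity of $v$, vanishes unless $v$ is bispecial.

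For this, suppose $v$ is not left special, so $m_l(v)=1$ and $v$ has a unique right extension $a$. Here two-sided extendability enters: given any left extension $b$ (so $bv\in\mathcal{L}$), the word $bv$ continues to some $bvc\in\mathcal{L}$, whence $vc\in\mathcal{L}$ forces $c=a$, so $bva\in\mathcal{L}$; conversely every bilateral extension restricts to a left extension. Thus $b\mapsto bva$ is a bijection and $m_b(v)=m_r(v)$, making $m_b(v)-m_r(v)-m_l(v)+1=0$. The case where $v$ is not right special is symmetric, using left-extendability. Hence only words of $\mathcal{BL}(n)$ contribute, and restricting the sum gives the claimed formula. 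The single point deserving care, and the only genuine obstacle, is precisely this extendability input: it is what rules out a left extension $b$ whose continuation $bv$ fails to reach length $n+2$, and I would record it explicitly as a consequence of minimality before invoking it.
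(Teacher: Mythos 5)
Your proof is correct, and it is precisely the argument of Cassaigne that the paper itself does not reproduce but cites (``For the proof of the lemma we refer to \cite{Ca} or \cite{Ca.Hu.Tr}''): the same telescoping of $p(n+2)-2p(n+1)+p(n)$ over the single index set $\mathcal{L}(n)$, followed by the same verification, via factoriality and two-sided extendability of the language, that the bilateral multiplicity $m_b(v)-m_r(v)-m_l(v)+1$ vanishes for non-bispecial $v$. You have also correctly and consistently used the paper's (swapped relative to the usual) convention in which $m_l(v)$ counts extensions $va$ and $m_r(v)$ counts extensions $bv$, and you rightly flag extendability as the one point needing justification.
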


For the proof of the lemma we refer to \cite{Ca} or \cite{Ca.Hu.Tr}. 

Since the map is minimal we just have to count the number of initial 
words of length $n$.
\begin{figure}[hbt]
\begin{center}
\includegraphics[width=6 cm]{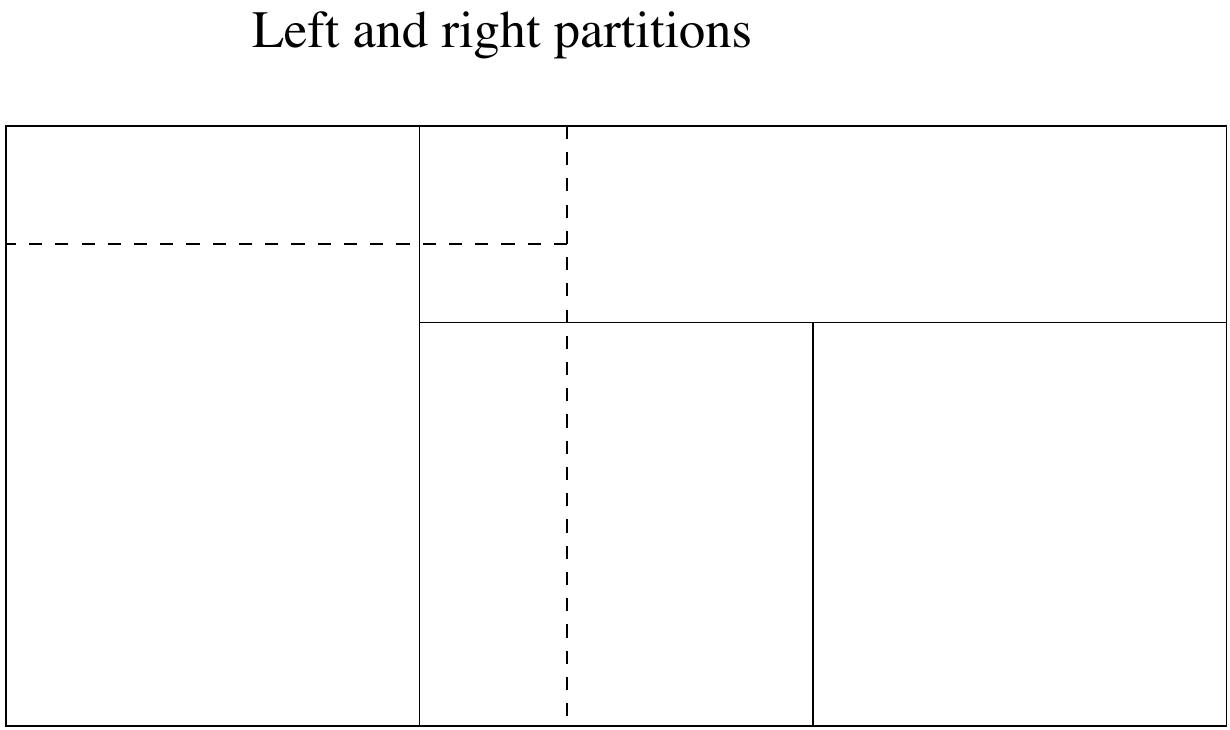}
\includegraphics[ width=7cm]{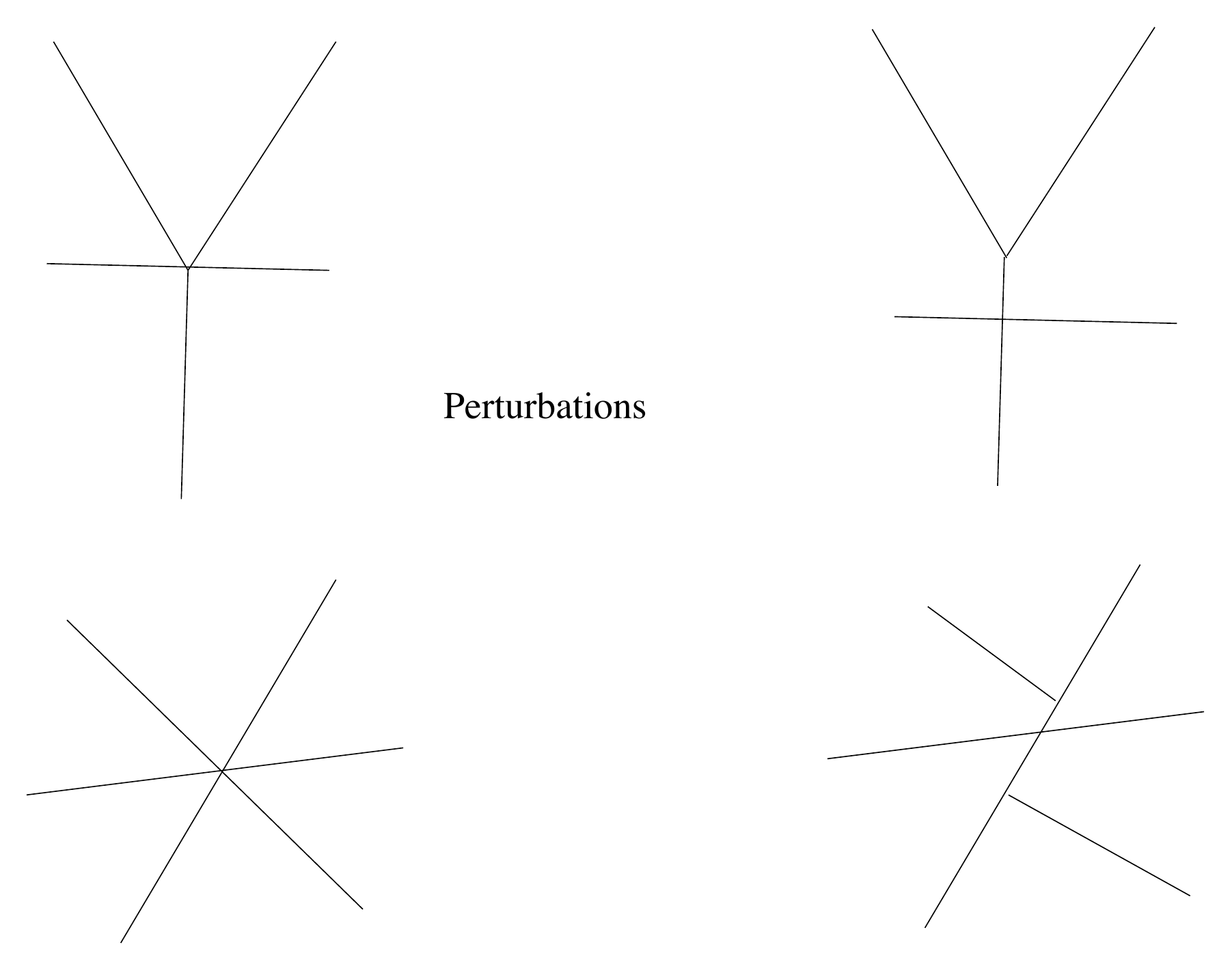}
\caption{\label{fig1}}
\end{center}
\end{figure}
We consider an affine polygon exchange $T$, we label the polygons of the 
partition $\mathcal{P}$ with the letters from a finite alphabet. Let a 
polygon of the partition corresponding to a letter $a$, it has a image by 
$T$ which is an union of polygons. These polygons are associated to the 
letters $b$ such that $ab$ is a word of the language. So a word $v$ of 
the langague is represented by a polygon, it is partitionned into 
$m_{r}(v)$ polygons each of which corresponds to a different word. Since 
$T$ is invertible we can repeat the same argument with $T^{-1}$ and 
$m_{l}(v)$. If we intersect the two partitions of the polygon associated 
to $v$ we obtain a partition into $m_{b}(v)$ polygons.

Now we use the Euler formula $F=E-V+1$ for a partition of a polygon 
into $F$ polygons with $E$ edges and $V$ verteces. There are three 
partitions so we obtain three equations.

$$m_{r}(v)=E_{r}(v)-V_{r}(v)+1,$$
$$m_{l}(v)=E_{l}(v)-V_{l}(v)+1,$$
$$m_{b}(v)=E_{b}(v)-V_{b}(v)+1.$$

The verteces of the third partition are the verteces of the first one plus 
the verteces of the second one and the verteces created by the intersection of an edge of the first partition and an edge of the second one. Let us call this number $k(v)$, by the IDOC2 we obtain $V_{b}(v)-V_{r}(v)-V_{l}(v)=k(v)$.

For the edges we use the IDOC2 condition, and we assume, for the moment that 
an image of an edge does not intersect a vertex. The IDOC2 tells us that 
two edges can only intersect in one point, and we see that each point $k(v)$
 create two edges, so $E_{b}(v)-E_{r}(v)-E_{l}(v)=2k(v)$, thus $m_{b}(v)-m_{r}(v)-m_{l}(v)+1=k(v)$.

If we are in the case where an image of an edge intersect a vertex then we 
can make a small perturbation of some edge of one partition, see Figure 
\ref{fig1}, 
such that the number of polygons does not change, the formula is now true and 
we obtain $$m_{b}(v)-m_{r}(v)-m_{l}(v)+1=k(v).$$   
To finish the proof we just have to remark that $\sum_{v\in \mathcal{BL}(n)}{k(v)}=N(n).$ 
\end{proofof}
\section{Generalized diagonals for the cubic billiard}
 \begin{proofof}{Theorem \ref{comp}}
 When we play billiard in a polyhedron  we can reflect the line or we can 
reflect the polyhedron and follow the same line, it is the unfolding.
\begin{figure}[hbt]
\begin{center}
\includegraphics[width= 6 cm]{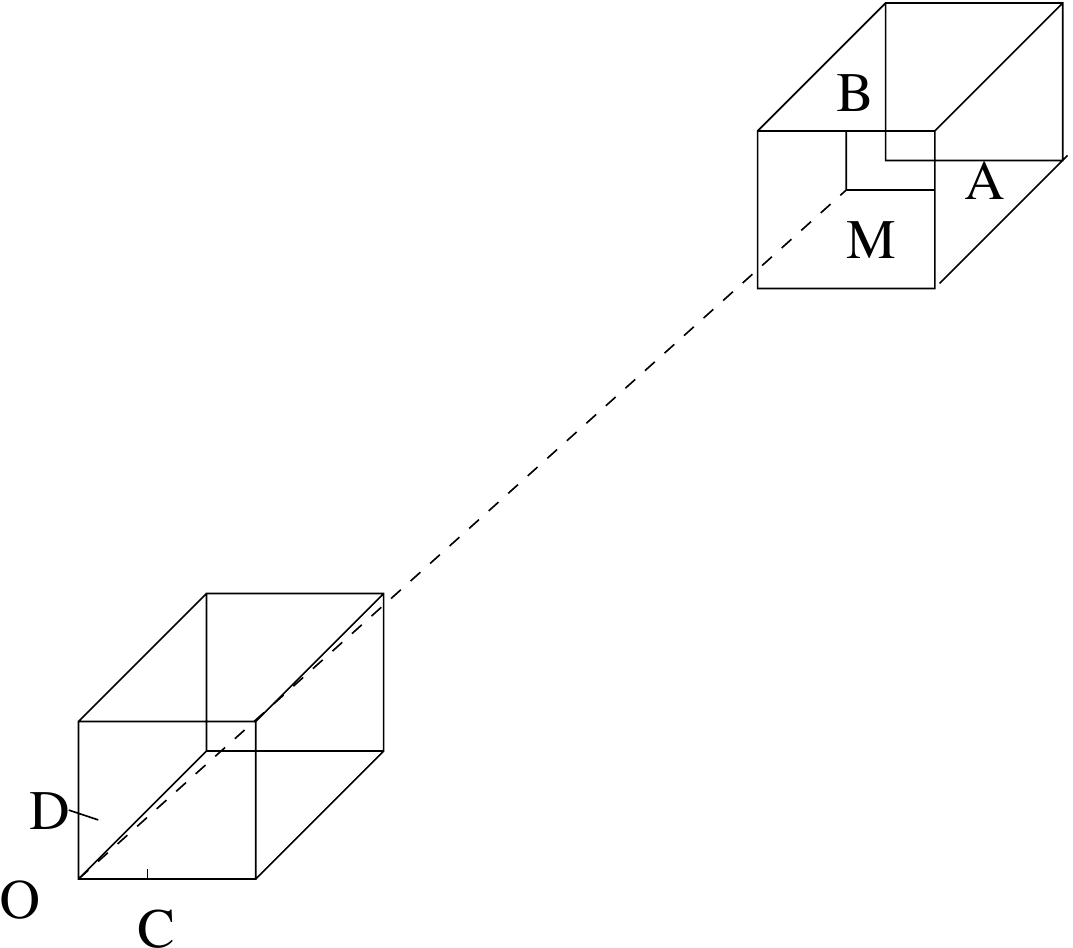}
\caption{\label{fig2}}
\end{center}
\end{figure} 
Let $O$ be a vertex of the cube and consider the segment of direction 
$\omega$ who start from $O$ and end at a point $M$ after it pass through 
$n$ cubes. $M$ is a point of a face of an unfolding cube, if we translate 
$M$ with a direction parallel to one of the two directions of the face we 
obtain a point $A$ on an edge and if we call $C$ the point such that 
$\vec{OC}=\vec{MA}$\quad then $CA$ is a generalized diagonal, and we have another 
one, $DB$ in the figure, arising from the second translation.

The symmetries of the cube implies that these diagonals are the only one.
It remains to prove that the two generalized diagonals are of combinatorial 
length $n$.  

The first thing to remark is that the condition of total irrationality 
implies that a generalized diagonal can not begin and end on two parallel 
edges. So the edges of begin and end are of different type.


To see that the combinatorial length is equal to $n$ we can remark that the 
sum of the length of the projections is twice the length of the trajectory, 
so we just have to prove it for the projection, i.e.\ billiard in the 
square, where it follows from the symmetry. Lemma \ref{jul} shows us that the 
diagonals are non degenerate.

So we obtain that $s(n+1)-s(n)=2$, since $p(1)=3 \quad p(2)=7$, we obtain $p(n,\omega)=n^{2}+n+1$.
\end{proofof}

Next we show that the $B$ condition of irrationality implies the 
impossibility to have a trajectory which pass through all three types 
of edges. It is the point that was wrong in the articles 
\cite{Ar.Ma.Sh.Ta,Ar.Ma.Sh.Ta1}.
\begin{lemma}
There exist a total irrational direction such that there exists $n$ and 
a trajectory of length $n$ which pass through three different edges.

If $\omega$ is a $B$ irrational direction  this is impossible.
\end{lemma}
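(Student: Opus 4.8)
The plan is to pass to the unfolding, where a billiard trajectory in direction $\omega=(\omega_1,\omega_2,\omega_3)$ becomes a straight line $\ell(t)=p+t\omega$ in $\mathbb{R}^3$ with $x_i(t)=p_i+t\omega_i$, and each family of edges becomes the locus where a fixed pair of coordinates is simultaneously integral: the trajectory meets an edge of type $i$ at time $t$ exactly when $x_j(t),x_k(t)\in\mathbb{Z}$ for $\{i,j,k\}=\{1,2,3\}$. The whole lemma then reduces to an elementary computation on these integrality conditions. First I would extract the obstruction: assuming the trajectory meets type $1$ at $t_1$, type $2$ at $t_2$ and type $3$ at $t_3$, comparing the coordinate that is integral at two of these three times gives $(t_1-t_2)\omega_3\in\mathbb{Z}$, $(t_2-t_3)\omega_1\in\mathbb{Z}$ and $(t_1-t_3)\omega_2\in\mathbb{Z}$. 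Writing these integers as $k_3,k_1,k_2$ and using $(t_1-t_3)=(t_1-t_2)+(t_2-t_3)$ yields
$$k_1\omega_1^{-1}-k_2\omega_2^{-1}+k_3\omega_3^{-1}=0,$$
a $\mathbb{Q}$-linear relation among the reciprocals $\omega_i^{-1}$. This single relation is the heart of both halves of the statement.

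For the impossibility under $B$-irrationality I would argue as follows. Since the $\omega_i^{-1}$ are then $\mathbb{Q}$-independent, the displayed relation forces $k_1=k_2=k_3=0$, hence $t_1=t_2=t_3$; at this common time all three coordinates $x_i$ are integral, so the trajectory would have to pass through a vertex. I then rule this out using only total irrationality: if the diagonal starts at $t=0$ on an edge which is not a vertex and reaches a vertex at some $s\neq0$, then two integrality conditions of the form $s\omega_2,s\omega_3\in\mathbb{Z}$ give a nontrivial integer relation between two of the $\omega_i$, contradicting the $\mathbb{Q}$-independence of $\omega_1,\omega_2,\omega_3$. Hence no $B$-irrational trajectory can meet three types of edges.

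For the existence statement I would run the computation backwards. I choose the reciprocals to satisfy $\omega_1^{-1}-\omega_2^{-1}+\omega_3^{-1}=0$ (that is, $k_1=k_2=k_3=1$) while keeping $\omega_1,\omega_2,\omega_3$ themselves $\mathbb{Q}$-independent. Parametrising $\omega_1=1/r$, $\omega_2=1/s$, $\omega_3=1/(s-r)$, the set of pairs $(r,s)$ for which $\omega_1,\omega_2,\omega_3$ satisfy some nontrivial integer relation is a countable union of curves, hence of measure zero, so a generic $(r,s)$ produces a direction that is totally irrational but not $B$-irrational. With the reciprocal relation secured, I pick times $t_1,t_2,t_3$ realising the prescribed differences and solve the integrality conditions for a starting point $p$: fixing two coordinates of $p$ to be integral handles the type-$2$ crossing, and the remaining condition is automatically consistent because $(t_1-t_3)\omega_2=k_2\in\mathbb{Z}$, which is exactly the relation arranged. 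This exhibits a genuine generalized diagonal meeting all three types, and its combinatorial length supplies the required $n$.

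I expect the main obstacle to be the existence half. One must simultaneously guarantee $\mathbb{Q}$-independence of the $\omega_i$ and $\mathbb{Q}$-dependence of the $\omega_i^{-1}$ (the measure-zero argument does this, but it needs the relation to remain genuinely nontrivial), and then verify that the backward construction yields a \emph{bona fide} diagonal rather than an empty intersection or a degenerate vertex hit. The degeneracies are precisely those controlled in the impossibility argument above, so the delicate point is checking consistency of the integrality system for $p$; everything else is routine.
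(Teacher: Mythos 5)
Your proof is correct, and the impossibility half is essentially the paper's own argument in different bookkeeping: the paper also unfolds, places three edge points $(x,0,0)$, $(a,y,b)$, $(c,d,z)$ with $a,b,c,d$ integers, and extracts the same rational relation among reciprocals, $\frac{a-c}{\omega_{1}}=\frac{b}{\omega_{3}}-\frac{d}{\omega_{2}}$, which is exactly your $k_{1}\omega_{1}^{-1}-k_{2}\omega_{2}^{-1}+k_{3}\omega_{3}^{-1}=0$ with the $k_{i}$ playing the role of the integer coordinate differences; parametrizing by collision times instead of point coordinates changes nothing of substance. You are in fact more careful than the paper on one point: the relation only contradicts $B$-irrationality when $(k_{1},k_{2},k_{3})\neq(0,0,0)$, a degenerate case the paper passes over in silence, whereas you observe that $k_{1}=k_{2}=k_{3}=0$ forces $t_{1}=t_{2}=t_{3}$, hence a vertex hit, and exclude it by total irrationality alone (note this step uses that the trajectory meets an edge at some \emph{other} time, which is automatic for a diagonal of positive length, or one can simply invoke that the three edge points are distinct, which is how the paper implicitly reads the statement). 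Where you genuinely diverge is the existence half: the paper exhibits the explicit direction $(1-\frac{\pi}{6},1,\frac{6-\pi}{12-\pi})$ together with three explicit edge points --- and indeed its reciprocals satisfy precisely your relation, since $\frac{6}{6-\pi}+1=\frac{12-\pi}{6-\pi}$ --- while you run the computation backwards, imposing $\omega_{1}^{-1}-\omega_{2}^{-1}+\omega_{3}^{-1}=0$ via $\omega=(1/r,1/s,1/(s-r))$ and choosing $(r,s)$ generically; your check that each integer relation reduces to $m_{1}s^{2}-m_{2}r^{2}+(m_{2}-m_{1}+m_{3})rs=0$, a proper curve unless $m_{1}=m_{2}=m_{3}=0$, is the key nontriviality and it holds. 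Your route costs the consistency verification for the starting point $p$ (which, as you correctly note, is exactly the imposed reciprocal relation) but buys a full two-parameter family of counterexamples and spares the transcendence argument that the paper's specific example tacitly needs to certify total irrationality; the paper's route is shorter but leaves that verification to the reader.
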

\begin{proof}
For the first point just consider the direction 
$(1-\frac{\pi}{6},1,\frac{6-\pi}{12-\pi})$ and the points on the edges 
$(\pi/6,0,0)$,$(1,1,\frac{6-\pi}{12-\pi})$,$(2,\frac{12-\pi}{6-\pi},1)$.

The proof of the second point is  by contradiction.

Let $\omega$  be a $B$ irrational direction and three points on some 
edges $(x,0,0)$,\quad $(a,y,b)$,\quad $(c,d,z)$ with $a,b,c,d \in \mathbb{N}$. 

The fact that the points are on a line with direction $\omega$ gives 
$$\frac{c-x}{a-x}=\frac{d}{y}=\frac{z}{b}.$$
Furthermore $\omega$ is a multiple of $(a-x,y,b)$ and of $(c-x,d,z)$, thus
$$\frac{c-x}{\omega_{1}}=\frac{d}{\omega_{2}} \quad \frac{a-x}{\omega_{1}}=\frac{b}{\omega_{3}}.$$
So we have $\frac{a-c}{\omega_{1}}=\frac{b}{\omega_{3}} -\frac{d}{\omega_{2}}$ which is excluded by $B$ irrationality.
\end{proof}
\section{Right prisms.}
We consider a right prism with a tiling polygon for base. To apply
 Corollary \ref{cor}, we need to count the 
generalized diagonals. The same construction as for the cube 
works but there is less 
symmetry so we must consider all the verteces of the base polygon and 
we remark that for one vertex the number of generalized diagonal can 
be null.


\begin{figure}[hbt]
\begin{center}
\includegraphics[width= 8cm]{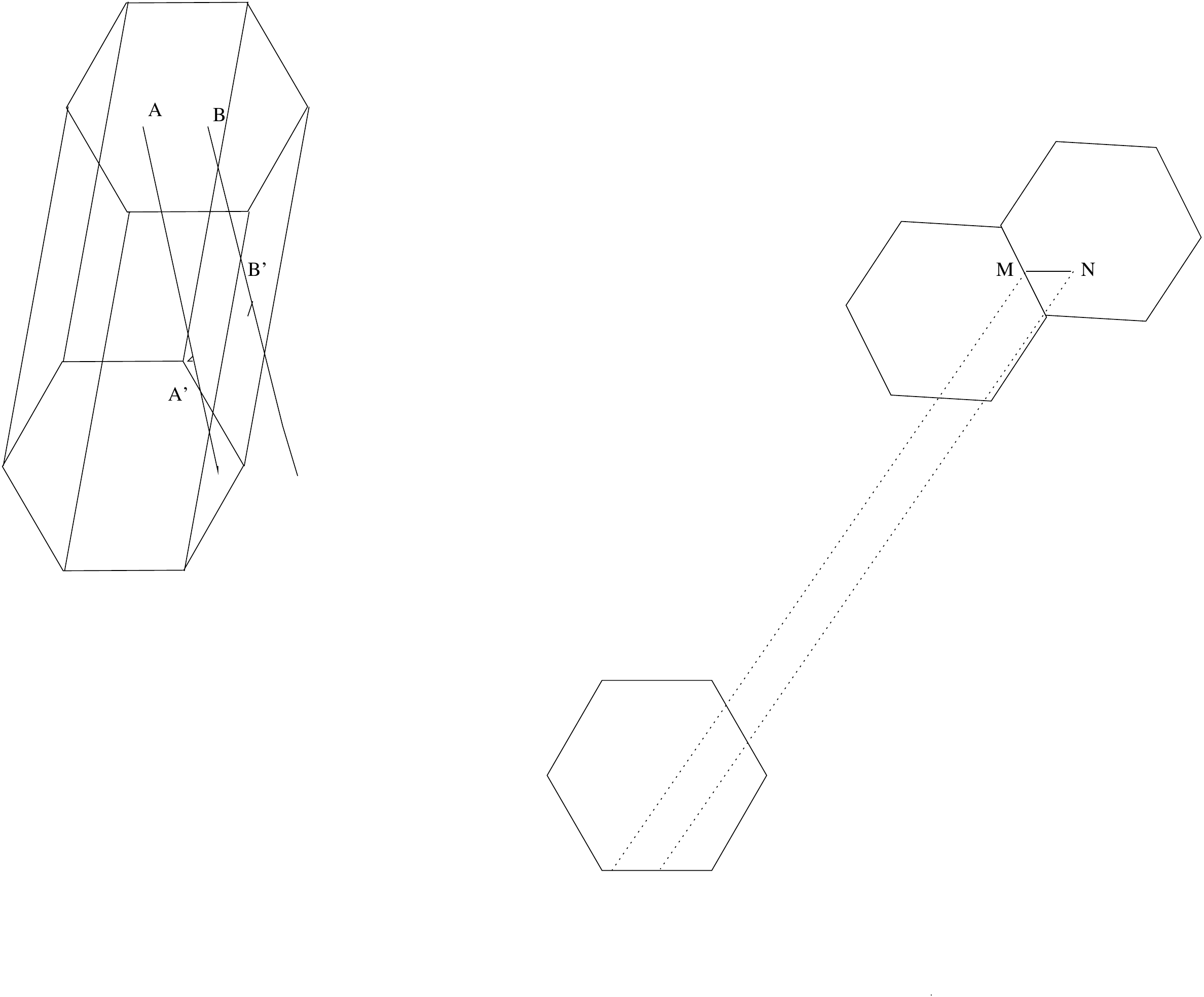}
\caption{\label{fig3}}
\end{center}
\end{figure}
 \begin{lemma}\label{diag1}
Let $P$ be a right prism and $\omega$ a $BP$ irrational direction. 
Consider the two diagonals we have constructed, they are of combinatorial 
length $n$. 
\end{lemma}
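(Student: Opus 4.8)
The plan is to exploit the product structure of the billiard in a right prism. Writing $P = B \times [0,h]$ with $B$ the tiling base polygon and the prism axis along the $z$-direction, the reflections decouple: the horizontal components of $\omega$ drive a billiard in the base polygon $B$, while the vertical component $\omega_3$ drives a one-dimensional billiard in the interval $[0,h]$. Under this splitting a reflection of the prism orbit on a lateral face is exactly a reflection of the planar orbit on an edge of $B$, and a reflection on the top or bottom face is exactly a bounce of the interval orbit at an endpoint. Consequently the combinatorial length of a prism orbit segment is the number of lateral crossings plus the number of top/bottom crossings, that is, the combinatorial length of the horizontal projection in $B$ plus the combinatorial length of the vertical projection in $[0,h]$.

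First I would unfold both factors simultaneously. Since $B$ tiles the plane under its reflection group and $[0,h]$ tiles the line, the prism unfolds to a tiling of $\mathbb{R}^3$ whose cells are the images of $P$; a prism orbit segment lifts to the straight segment of direction $\omega$, and its combinatorial length equals the number of cells this lift meets. By the construction preceding the lemma (the exact analogue of the cube construction), the two diagonals we produced are translates of the straight segment $OM$ joining a vertex $O$ to the point $M$ reached after passing through $n$ cells, since $\vec{CA}=\vec{OM}$. Thus it suffices to show that the translation taking $OM$ to the generalized diagonal preserves the number of cells traversed, and that this number is exactly $n$.

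To control the count I would treat the two factors separately. In the interval factor the lift of the vertical projection is a segment of $\mathbb{R}$, and the number of integer multiples of $h$ it crosses is unchanged under the small translation parallel to a face; this gives the vertical crossing number directly. In the base factor the lift of the horizontal projection is a straight segment in the tiling of $\mathbb{R}^2$ by copies of $B$, and here the $BP$ irrationality plays the decisive role. It guarantees that the projected segment is in generic position, so it never passes through a vertex of the tiling and crosses every tiling edge transversally, and moreover that the endpoints of the generalized diagonal lie on edges of distinct, non-parallel types, exactly as total irrationality did for the cube. Under these genericity conditions the count of base-edge crossings is a locally constant function of the segment, hence invariant under the translation, and summing with the vertical count recovers the value $n$ fixed by the construction. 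Finally, Lemma \ref{jul} confirms that the resulting generalized diagonals are non-degenerate.

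The main obstacle is the loss of symmetry: for the cube one reduces to a single square billiard and invokes its symmetry to evaluate each projection, whereas here the base polygon is an arbitrary tiling polygon and no such symmetry is available. The work therefore shifts to the tiling structure of $B$ itself, and the delicate point is the bookkeeping at the endpoints of the diagonal, which lie on edges of the prism that project either to vertices of $B$ (vertical edges) or to edges of $B$ at height $0$ or $h$ (horizontal edges); one must verify that the genericity furnished by $BP$ irrationality keeps the crossing count exact precisely at these boundary positions.
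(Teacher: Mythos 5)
Your reduction to the product structure of the prism and the identity (combinatorial length equals base crossings plus top/bottom crossings) is fine, but the heart of your argument --- that the crossing counts are ``locally constant functions of the segment, hence invariant under the translation'' taking $OM$ to the generalized diagonal --- does not hold, and this is exactly the difficulty the lemma has to overcome. Local constancy transfers along a continuous family of translates only if every intermediate segment stays generic, and nothing guarantees this: $BP$ irrationality constrains genuine diagonals in the direction $\omega$ (no diagonal through three sorts of edges, none through two parallel edges); it says nothing about the intermediate translates, which are not trajectories of anything. The translation has length comparable to a side of the face, so the moving segment sweeps a parallelogram that in general contains vertices of the unfolded tiling, and at such passages the crossing count genuinely jumps: in the tiling of the plane by the hexagonal base, three tiles meet at each vertex, so a segment passing on one side of a vertex crosses one edge while on the other side it crosses two. (In the square and cube tilings an even number of cells meet around each vertex or edge and the count is preserved --- this, together with the symmetry of the square, is what makes the projection argument work for the cube; the paper explicitly notes that for a general prism ``there is less symmetry''.) The vertical count is not invariant either: if the chosen direction of the face is vertical, both endpoints shift by the same height $t$, and the number of multiples of $h$ in the open height interval $(z_{\min}+t,\,z_{\max}+t)$ varies with $t$. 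Finally, the initial segment $OM$ starts at the vertex $O$, the most degenerate configuration possible, where local constancy gives no information at all; your closing paragraph correctly identifies this endpoint bookkeeping as ``the delicate point'' but then simply asserts that $BP$ irrationality settles it, which restates the conclusion rather than proving it.

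The paper proves the lemma by a different, purely combinatorial device: an induction on the rank of the diagonal, with inductive statement that, in a fixed direction, all trajectories starting from the same edge and ending on the same face of the unfolded polyhedron have the same combinatorial length. Given two diagonals of rank $n+1$ issued from a common edge, one follows them backwards to the points $A'$, $B'$ on the first faces crossed (see Figure \ref{fig3}) and either applies the hypothesis directly, if these lie on the same face, or connects the two faces by a finite chain of faces, comparing along the common edges. This sidesteps any translation-invariance of crossing counts, which, as explained above, fails for tilings such as the hexagonal one. To repair your proof you would need to replace the local-constancy step by an argument of this inductive type, or else exhibit a path of translates avoiding all vertices of the tiling and control the degenerate initial position separately --- neither is done in your proposal.
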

\begin{proof}

Consider the following induction hypothesis for a right prism:

In a fixed direction all the trajectories who started from the same edge 
and stop on the same face of an unfolding polyhedron are of the same 
combinatorial length.

For $n=1$ it is obvious, let us assume it is true for $n$ and consider two 
diagonals of the rank $n+1$ who started from the same edge. Call their ending 
points $A$ and $B$ and follow the trajectories in the inverse direction. The 
intersection with the first faces are $A'$, $B'$, see Figure \ref{fig3}, if 
they are on the same 
face we can apply the hypothesis, else there exists a finite sequence of faces who connect these faces, we consider the points on the common 
edges and we apply the same reasoning.
\end{proof}

\begin{lemma}\label{dirdiag2}
Let $P$ be a right prism with regular hexagonal base and $\omega$ a $BP$ 
irrational direction. It is impossible to have two diagonals of the same 
length which start from the same edge.

If $P$ is a right prism with tiling triangle base the number of 
diagonals which start from an edge is bounded.
\end{lemma}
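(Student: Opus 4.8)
The plan is to pass to a straight-line model by unfolding, and then to exploit the strong restriction that $BP$ irrationality imposes on the edges a single unfolded line may meet. Since the base is a tiling polygon, reflecting it across its sides tiles the plane, while the height coordinate unfolds to $\mathbb{R}$; under this product unfolding a trajectory of direction $\omega$ becomes a straight line, and a generalized diagonal of combinatorial length $n$ becomes a segment crossing exactly $n$ faces whose two endpoints lie on images of edges of $P$. Classifying edges by type (the three edge-directions of the base together with the vertical direction), $BP$ irrationality says exactly that such a line meets neither two parallel edges nor edges of three distinct types. Hence the endpoints of any diagonal lie on edges of exactly two \emph{different} types and the segment meets no other edge; this two-type rigidity is the engine of the whole argument.

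Fixing a starting edge $e$, I would parametrize the length-$n$ diagonals issuing from $e$ by the unfolded position $M$ of the far endpoint. Because the combinatorial length is fixed, $M$ ranges over a bounded region of the tiling (the tiles at combinatorial distance $n$), and by the two-type rigidity its edge-image has a type forced by that of $e$. Thus only finitely many edge-images can serve as terminal edges, and among them the fixed direction $\omega$ selects the actual diagonals.

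For the regular hexagonal base (first assertion) I would then prove uniqueness. The honeycomb generated by the hexagon has only three edge-directions and a uniform local symmetry at every vertex, so once $e$, $\omega$ and $n$ are fixed the admissible terminal edge-image is pinned down: if two distinct length-$n$ diagonals started from $e$, comparing their unfolded far endpoints would produce a single straight line meeting either two parallel edges or edges of three types, contradicting $BP$ irrationality. For a tiling triangle (second assertion) the same unfolding and the same two-type rigidity hold, but the triangle's reflection group is larger and its vertices carry unequal angles, so several edge-images of the correct type cluster near a vertex at the relevant depth. Their number is controlled by the orders of the vertex stabilizers in the (finite) reflection group of the triangle, hence bounded by a constant independent of $n$; this yields boundedness rather than uniqueness, exactly as claimed.

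The main obstacle is the rigidity step for the hexagon: converting the hypothetical coexistence of two equal-length diagonals from $e$ into a genuine violation of $BP$ irrationality, i.e.\ checking that the discrepancy between their unfolded endpoints cannot be absorbed by the honeycomb's symmetries without forcing the forbidden passage through two parallel edges or three edge-types. Once this is settled, the triangle case reduces to the comparatively routine bookkeeping of how many edge-images accumulate around each vertex, which is where uniqueness necessarily degrades to a mere bound.
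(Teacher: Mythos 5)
You have set up the right framework (unfolding onto the reflection tiling of the plane times $\mathbb{R}$, and the ``two-type'' restriction that $BP$ irrationality imposes on the edges a diagonal may meet), but the decisive step for the hexagon is missing, and you say so yourself: the ``main obstacle'' you identify at the end is precisely the content of the lemma, so the proposal asserts the conclusion rather than proving it. Moreover, the mechanism you sketch cannot work as stated: two length-$n$ diagonals issuing from the same edge are \emph{parallel} segments (both in direction $\omega$), so ``comparing their unfolded far endpoints'' does not produce ``a single straight line'' that could pass through two parallel edges or three edge types --- there is no way to concatenate two parallel segments into one trajectory. What the paper actually uses, and what your sketch lacks entirely, is a comparison by \emph{short translations}: one slices with the horizontal plane through one endpoint $M$, which meets the other diagonal in a point $N$; the induction of Lemma \ref{diag1} handles the case where $M$ and $N$ lie in the same hexagon (equal lengths, impossible); and otherwise the non-overlapping of the hexagonal tiling forces a point on an edge that a translation of length less than one, parallel to a side, carries out of the hexagon --- the contradiction with $BP$ irrationality. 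Two further cases you do not address at all are also needed: endpoints on vertical edges (related by a horizontal translation of length less than one) and a vertical initial edge (the two terminal edges lie in different hexagons and a vertical translation of length less than one moves one to the other). Note also that the same-height normalization making these translation comparisons meaningful rests on the equal-length statement extracted from the proof of Lemma \ref{diag1}, which your parametrization by ``tiles at combinatorial distance $n$'' never invokes.

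For the triangle case your idea --- bounding the count by the local combinatorics of the triangle tiling (you phrase it via vertex stabilizers of the finite reflection group) --- is in the same spirit as the paper, which simply appeals to Figure \ref{fig4}, so that half of the proposal is acceptable in outline. But since the hexagon assertion is the substantive half and your argument for it is both incomplete by your own admission and geometrically incoherent at the key step, the proposal as a whole has a genuine gap.
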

\begin{proof}
The proof is by contradiction.
Suppose the lenght of a side is one.

Assume that the initial edge is on the hexagon.
Suppose that on this edge two diagonals start and are of length $n$.
If the end points of the diagonals are on vertical edges we can 
translate one point to the other by a horizontal translation of lenght 
less than one, contradiction. 
Consider the plane parallel to the horizontal one and who pass on one 
of the end points $M$, see Figure \ref{fig3}. This plane intersects the 
diagonals 
on two points $M$ and $N$. If $M$ and $N$ are in the same hexagon by the 
proof of the preceding lemma we have the same length for the two segments,
 so it is impossible. Since the hexagon tiles 
the plane the polygons do not overlap, thus we must have a point on an 
edge of an hexagon which by translation of length less than one parallel 
to one side go out of the hexagon, contradiction. 

If the initial edge is vertical the final edges are of different hexagons,
 and a vertical translation of lenght less than one 
moves one to the other, contradiction.

For the tiling triangles
Figure \ref{fig4} gives us that the number of diagonals 
starting from an edge is bounded.

\end{proof}
\begin{figure}[hbt]
\begin{center}
\includegraphics[width= 5cm]{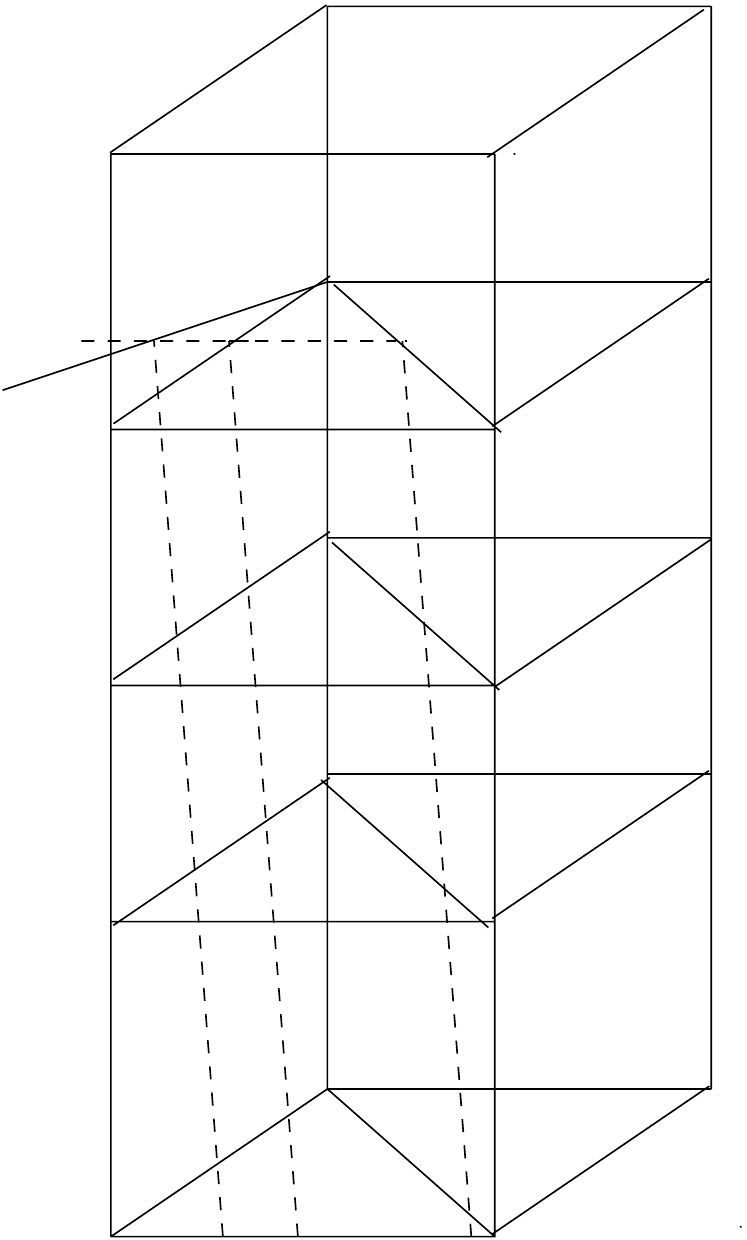}
\caption{\label{fig4}}
\end{center}
\end{figure}
\begin{proofof}{Corollary \ref{cor}}
We introduce another coding. The polyhedron $Q$ is 
made by gluing some copies of $P$. The new coding associates one letter to 
each polygon of the partition of $Q$. The orbit of a point has now 
three codings: one for the billiard map $M$, one for the new coding $M'$ 
and one for the natural coding of the polygonal exchange $M''$ 
whose complexities are denoted by $p(M, n), p(M', n),p(M'', n)$ respectively.
Since there any many copies of $P$ tiling $Q$ we have $p(M, n)\leq p(M', n)$.


The number of polygons of the exchange is related to the
number of polygons of the partition of $Q$. One face gives one 
polygon of the exchange, except if the preimages of singularities
 intersect this face. In this case the face is cut into several polygons,
thus $p(M', n)\leq p(M'', n)$. 
Since the number of singularities is bounded (by a constant $C$) we have 
that a word of length
one in $M''$ is a word of length less than $C$ in $M'$. We deduce
that $p(M'', n)\leq p(M', C+n)$. 
The computation of $p(M'', n)$ can be completed using Proposition \ref{comp1} 
when we 
remark that $N(n,\omega)=bN(n,\omega)$ where $b$ is the number of polygons of the exchange.
\end{proofof}

\begin{proofof}{Theorem \ref{comp2}}
The coding is the natural one, with a letter for each face. The billiard map 
in the direction $\omega$ is a polygon exchange. 
Lemma \ref{diag1} shows that the diagonals are of combinatorial length $n$.
Lemma \ref{dirdiag2} implies that the number of generalized diagonals is bounded, 
the numbers of polygons of the exchange is also bounded, thus
 $bN(n,\omega)$ is bounded.




For a triangle a line which passes through an interior point of the 
triangle and which is parallel to an edge intersect the triangle. For 
a hexagon it is false but if we consider the lines parallel to two edges,
 one of them intersect the hexagon. So there is always at least
one generalized diagonal of length $n$. So there exists $D$ such that 
$1\leq N(n,\omega)\leq D$ and
Corollary 4 implies the existence of $a,d$ with 
$$a\leq p(M'', n)/n^{2}\leq d.$$

Using $p(M', n)\leq p(M'', n)\leq p(M', C+n)$ we obtain the existence 
of $A,B$ such that 
$$B\leq p(M', n)/n^{2}\leq A.$$

Like in the two dimensional case, \cite{Hu}, we prove.
\begin{lemma}
There exists an integer $m$ such that 
$$p(M, n)=p(M', n)\quad  \forall n>m.$$
\end{lemma}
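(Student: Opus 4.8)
The plan is to realize both codings as symbolic factors of the (minimal) polygon exchange and to reduce the equality $p(M,n)=p(M',n)$ to a \emph{synchronization} statement. Each letter of the $M'$-coding is a pair consisting of a face of $P$ together with the copy of $P$ in $Q$ that is being crossed, i.e.\ an element of the finite group $G$; forgetting the copy gives a $1$-block factor map $\pi$ from the $M'$-subshift onto the $M$-subshift, which is exactly why $p(M,n)\le p(M',n)$. Reading a face $a$ sends the current copy $g$ to $s_{a}g$, where $s_{a}\in G$ is the reflection in $a$; hence along any orbit the copy at time $i$ is determined by the copy at time $0$ and the word read up to time $i$, through $g_{i}=s_{a_{i-1}}\cdots s_{a_{0}}\,g_{0}$. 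Consequently an $M$-word $u$ of length $n$ lifts to exactly as many $M'$-words as there are distinct values taken by the copy $g\in G$ over the occurrences of $u$, and one has $p(M',n)=\sum_{u}c(u)$, where $c(u)$ denotes the number of $M'$-lifts of $u$. Thus the lemma is equivalent to the assertion that there is an integer $m$ such that every $M$-word of length $>m$ occurs with a single copy $g$.

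Then I would argue by contradiction and compactness. Suppose that for every $m$ there is an $M$-word of length $>m$ occurring with two different copies. Since $G$ is finite there are two fixed elements $g\neq g'$ and arbitrarily long $M$-words occurring with both; by compactness of the subshift (K\"onig's lemma) one extracts a bi-infinite $M$-sequence carried by two orbits $\gamma\neq\gamma'$ of the fixed direction $\omega$ whose copies differ by the constant element $r=g^{-1}g'\neq e$ at every time. Unfolding both orbits into $\mathbb{R}^{3}$, they develop to two parallel lines of direction $\omega$ crossing the very same combinatorial sequence of faces, one being the image of the other under the fixed deck isometry attached to $r$; in particular their difference is a fixed nonzero vector $v$.

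Finally I would extract the geometric contradiction from $BP$-irrationality, exactly as in the two-dimensional case \cite{Hu}. The offset $v\neq 0$ must be compatible with every face crossed by the common itinerary: at each crossing $v$ has to be parallel to the corresponding face, so $v$ is forced into the directions of infinitely many crossed edges. Tracking these compatibility relations turns $v$ into a nontrivial integral relation between edge vectors along a segment of direction $\omega$, that is, into a generalized diagonal (a saddle connection) of direction $\omega$ joining parallel edges or meeting three types of edges; this is precisely what $BP$-irrationality, together with the lemma forbidding trajectories through three edges, excludes. Hence no such $v$ exists, every sufficiently long $M$-word synchronizes the copy, and $p(M,n)=p(M',n)$ for all $n>m$. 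I expect this last step to be the real difficulty: making precise, in the three-dimensional unfolding of the prism, that a single admissible offset vector surviving along an entire bi-infinite face itinerary must produce a forbidden diagonal is exactly the point where the prism geometry and the $BP$ condition have to be used with care.
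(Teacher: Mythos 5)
Your first two paragraphs are sound and, modulo vocabulary, coincide with the paper's opening moves: your $1$-block factor $\pi$ and the cocycle $g\mapsto s_{a}g$ give a clean reformulation of the paper's ``projection letter by letter,'' your reduction of $p(M,n)=p(M',n)$ to synchronization of the copy $g$ is correct (the set of copies compatible with a word is monotone under extension, so it suffices that long words carry a single copy), and your compactness extraction of two bi-infinite lifts is exactly the paper's passage to the limit words $v\neq w$ with $\pi(v)=\pi(w)$. The crux, however --- why two distinct orbits cannot share an $M$-coding --- is where your argument has a genuine gap, and you concede as much in your last sentence. Two concrete problems. First, the claim that the offset $v$ ``has to be parallel to the corresponding face'' at each crossing is false: two parallel lines crossing the same face are constrained only in that both intersection points lie in the same face polygon, which imposes nothing on the direction of $v$. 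A surviving offset means the strip between the two unfolded lines avoids every edge forever; turning that into a generalized diagonal through three edge types or two parallel edges (so that $BP$-irrationality could bite) would need a limiting or maximal-strip argument you never supply, and it is in any case not how the paper proceeds: in the paper $BP$-irrationality is used to count diagonals, not to prove this lemma. Second, you take for granted that the two orbits both travel in direction $\omega$, so that the unfoldings are parallel; a priori they only share the $M$-coding and could move in different elements of $G\omega$.

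The paper closes both gaps with tools absent from your proposal. Uniqueness of the direction given the coding is quoted from \cite{Ga.Kr.Tr} (together with a reduction replacing a singular orbit by a nonsingular one with shifted coding), which is what legitimizes your parallelism assumption. Uniqueness of the point is then derived from the \emph{tiling} hypothesis on the base polygon: if an orbit returns to a face carrying the same label with the same direction, the tiling property forces the two faces to be parallel, hence the isometries of the associated polygon exchange are translations, and a translation polygon exchange in a minimal direction cannot assign the same infinite coding to two distinct points. Your argument never invokes the tiling hypothesis in this role, and without it (or a substitute, e.g.\ a minimality argument showing edge preimages are dense, cutting any persistent strip) the constant-offset scenario you set up is not excluded. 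So the reduction is right, the route is genuinely different from the paper's, but the decisive geometric step is both incorrect as stated and incomplete.
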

\begin{proof}
The proof is by contradiction.
The language $M'$ is bigger than $M$ and a word of $M'$ gives a word of 
$M$ by a projection, $\pi$, letter by letter.

If it is false we can find for any $n$ two different words $v_{n}$ and 
$w_{n}$ of $M'$ such that
theirs projections in the language $M$ are equal.
We can assume that the words $v_{n}$ are succesive and the same thing for
 $w_{n}$. Let us call $v$ and $w$ the two limit points of the sequences 
$(v_{n})$ and $(w_{n})$.

 There exists $m\in \partial{P}$ and $\phi$ a direction such that
the orbit of $(m,\phi)$ has $\pi(v)$ as coding. The direction is unique,
not necessary the point $m$, see \cite{Ga.Kr.Tr}.

If the orbit of $(m,\phi)$ is singular then there exists 
$e \in \partial{P}$ $\phi'$ a direction, $k$ an integer
 such that the forward orbit of $(e,\phi')$ has $T^{k}(\pi(v))$ as coding 
and is non singular.
 Moreover $T^{-1}(e,\phi)$ belongs to an edge of $P$.

It remains to prove that the point on $\partial{P}$ is unique.

If a point of a face has an orbit which passes through the same face with 
the same direction, the tiling property of $P$ implies that those two faces 
are parallel. So the isometries of the polygonal exchange are translations,
 which implies that it is impossible to have two points with the same coding.  

The point on $\partial{P}$ is unique, and we have $v=w$.
 
\end{proof}

So the last inequality is true for $p(n)$ for $n$ large, if we 
change the constant $B$ it is still true for all the integers.
\end{proofof}
\bibliography{bibliobary.bib} 
\end{document}